\newtheorem{thm}{Theorem}[section]
\newtheorem{lem}{Lemma}[section]
\newtheorem{prop}{Proposition}[section]
\begin{document}
\title{An stratification of $B^4(2,K_C)$ over a general curve.}
\author{Abel Castorena and Graciela Reyes-Ahumada}
\address{Centro de Ciencias Matem\'aticas (Universidad Nacional Aut\'onoma de M\'exico), Campus Morelia,
Apdo. Postal 61-3(Xangari). C.P. 58089, 
Morelia, Michoac\'an. M\'exico. 
Instituto de Matem\'aticas (Universidad Nacional Aut\'onoma de M\'exico), Sede Oaxaca,
Leon 2 Centro,  C.P. 68000,
Oaxaca, Oaxaca. M\'exico.}
\email{abel@matmor.unam.mx, grace@matmor.unam.mx}

\begin{abstract} For a general curve $C$ of genus $g\geq 10$, we show that the Brill-Noether locus $B^4(2,K_C)$ contains irreducible sub-varieties $\mathcal B_3\supset \mathcal B_4\supset \cdots \supset \mathcal B_n$, where $\mathcal B_n$ is of dimension $3g-10-n$ and $\mathcal B_3$ is an irreducible component of the expected dimension $3g-13$.\end{abstract}
\thanks{The first author was supported by PAPIIT IN100716, Universidad Nacional Aut\'onoma de M\'exico).\\
Second author was supported by a FORDECYT(CONACyT, M\'exico) fellowship.}
\subjclass[2000]{14C20(primary), 14J26(secondary)}

\maketitle
\section{Introduction}

\noindent We study the Brill-Noether locus $B^4(2,K_C)$ that parametrizes classes of stable rank two vector bundles with canonical determinant and four sections over a general irreducible complex projective curve $C$.

 \noindent In \cite{CR}, the authors construct an irreducible component of $B^4(2,K_C)$ of the expected dimension by studying the space of extensions of line bundles following the spirit in \cite{CF1}. In this work we generalize such construction over the general curve for the case of rank two bundles with four sections in the following sense: we exhibit a chain of irreducible closed subloci $\mathcal B_3\supset\cdots\supset \mathcal B_n$ inside $B^4(2,K_C)$ where $\mathcal B_3$ is an irreducible component of expected dimension $3g-13$ ($\mathcal B_3$ is the component constructed in \cite{CR}) and the dimension of any stratrum $\mathcal B_n$ is $3g-10-n$. In order to give such stratification we analize the space $Ext^1:=Ext^1(K_C(-D_n),\mathcal{O}_C(D_n))$ which parametrizes extensions of the form
$$u:0\rightarrow \mathcal{O}_C(D_n)\rightarrow E_u \rightarrow K_C(-D_n)\rightarrow 0;$$
where $D_n$ is a general effective divisor of degree $n$ over $C$. We construct the locus $\mathcal B_n$ as the image of an irreducible determinantal locus $\Delta_n \subset \mathbb{P}Ext^1$ through the rational map $\pi:\Delta_n \rightarrow U(2,K_C), [u]\to [E_u]$, where $U(2,K_C)$ denote the moduli space of rank two stable bundles with canonical determinant. By looking at the deformations inside the global space of extensions constructed in \cite{CF1} and let the divisor $D_n$ to vary, we conclude that such $\mathcal B_n$ stratify the component $\mathcal B_3$ of $B^4(2,K_C)$. Our main result is Theorem \ref{mainthm}, and we prove it by applying from Lemma 3.2 to Lemma 3.5. With these Lemmas we construct an stratification of $B^4(2,K_C)$ by the loci $\mathcal B_n$.  

\noindent We would like to emphasize that in this work we do not find a new component of the Brill-Noether locus $B^4(2,K_C)$, but we do give a partial answer to the problem to describe deformations of a regular component by studying deformations of extensions of line bundles. An interesting question (which remains open) coming  from this stratification is to describe for every $n\geq 3$, which kind of divisorial component defines $\mathcal B_{n+1}$ inside $\mathcal B_n$ at least for curves of small genus .

\section{Extensions of Line Bundles and Geometrically Ruled Surfaces.}
\subsection{Global space of extensions}\label{extensions} We restrict our attention to special line bundles $L, N$ over a smooth curve $C$, and we assume that for any integer $t\geq 1$, $h^1(N)\geq t$ and $h^0(L)\geq\text{max}\{1,h^1(N)-t\}$. For a more general context, see \cite{CF1}. 

\noindent Given a line bundle $M$ over a smooth curve, we denote by $\rho_M$ (or simply $\rho$, when $M$ is understood) the Brill-Noether number of $M$. 

\noindent For any $u\in H^1(N\otimes L^{\vee})\simeq Ext^1(L,N)$, consider the coboundary map $\partial_u:H^0(L)\rightarrow H^1(N)$ and denote by $\text{coker}(\partial_u):=\text{dim(Cokernel}(\partial_u))$. Consider the cup product map $\cup:H^0(L)\otimes H^1(N\otimes L^{\vee})\to H^1(N)$. We have that $\partial_u(s)=s\cup u$. By Serre duality, $\cup$ is equivalent to consider
for any subspace $W\subseteq H^0(K_C\otimes N^{\vee})$, the multiplication map $\mu_W:H^0(L)\otimes W\to H^0(K_C\otimes L\otimes N^{\vee})$. 

\noindent For $u\in H^1(N\otimes L^{\vee})\simeq Ext^1(L,N)\simeq H^0(C,K_C\otimes L\otimes N^{\vee})^{\vee}$, we denote by $H_u:=\{u=0\}\subset H^0(C, K_C\otimes L\otimes N^{\vee})$ the hyperplane defined by $u$, so if $u\in Ext^1(L,N)$  is such that $coker(\partial_u)\geq t$, then $W:=\text{Im}(\partial_u)^{\perp}\subset H^0(K_C-N)$ satisfies that $Im(\mu_{W})\subset H_u$, thus we define:

$$\mathcal W_t=\{u\in Ext^1(L,N)| \exists W\subset H^0(K_C-N)\text{ with }\text{dim}(W)\geq t\text{ and } Im(\mu_W)\subset H_u\}.$$

\noindent Note that $\mathcal W_t$ has a natural structure of determinantal variety. 

\noindent Let $C$ be a general curve of genus $g\geq 3$. Let $0\to N\to E\to L\to 0$ be an exact sequence such that
$\text{deg}(N)=d-\delta>0, \text{deg}(L)=\delta>0$ and $h^0(L)\cdot h^1(L)>0,h^0(N)\cdot h^1(N)>0$. Let 
$\ell:=h^0(L), j:= h^1(L), n:=h^0(N), r:=h^1(N)$. Set 
$\mathcal Y:=\text{Pic}^{d-\delta}( C)\times W^{\ell-1}_{\delta}( C),
\hskip2mm Z:=W^{n-1}_{d-\delta}( C)\times W^{\ell-1}_{\delta} (C)\subset\mathcal Y$. One has 
$\text{dim}(\mathcal Y)=g+\rho_L$ and 
$\text{dim}(Z)=\rho_L+\rho_N$. Since $C$ is general, we have that $\mathcal Y$ and $Z$ are irreducible when $\rho>0$; otherwise
one replace the (reducible) zero-dimensional Brill-Noether locus with one of its irreducible components to construct 
$\mathcal Y$ and $Z$ as above.

\noindent When $2\delta-d\geq 1$ one can construct a vector bundle $\mathcal E\to\mathcal  Y$ of rank
$m=2\delta-d+g-1$ (see \cite{ACGH}, p. 176-180), together with a projective bundle morphism 
$\gamma: \mathbb P(\mathcal E)\to\mathcal Y$ where, for $y=(N,L)\in\mathcal Y$, the fiber 
$\gamma^{-1}(y)=\mathbb P(\text{Ext}^1(N,L))=\mathbb P$. We have that 
$\text{dim}(\mathbb P(\mathcal E))=\text{dim}(\mathcal Y)+m-1\text{ and }\text{dim}(\mathbb P(\mathcal E)|_Z)=
\text{dim}(Z)+m-1$. Since (semi)stability is an open condition, for $2\delta-d\geq 2$ there is an open, dense subset
$\mathbb P(\mathcal E)^0\subseteq\mathbb P(\mathcal E)$ and a morphism $\pi_{d,\delta}:\mathbb P(\mathcal E)^0\to U_C(d)$.  
In \cite{CR} the authors showed that on a general curve $C$ of genus $g\geq 8$, there exists an irreducible 
component of $\mathcal W_3$, $\Delta_3\subset\mathbb P(\mathcal E)$ of dimension $3g-13$. Moreover, this component fills up an irreducible component 
$\mathcal B_3\subseteq B^4(2,K_C)$ of dimension $3g-13$. 

\noindent Following the above construction, We will show in the next section that for $n\geq 4$, there exist closed and irreducible subschemes $\mathcal B_n$ all of them contained in $\mathcal B_3$ such that $\mathcal B_{n+1}\subset\mathcal B_n$ and $\text{dim}(\mathcal B_n)=3g-10-n$.

\subsection{Unisecant curves in geometrically ruled surfaces} Let $E$ be a rank-two vector bundle over a smooth irreducible complex projective 
curve $C$ of genus $g$. Let $S:=\mathbb P(E)$ be the (geometrically) ruled surface associated to it, with structure map $p:S\to C$.  For any $x\in C$ we denote by $f_x=p^{-1}(x)\simeq\mathbb P^1$ and we write $f_D=p^*(D)$ for $D\in\text{Div}(C) $. We denote by $f$ a general
fiber of $p$ and by $\mathcal O_S(1)$ the tautological line bundle on $S$. We recall that there is a one-to-one correspondence between sections $\Gamma$ of $S$ and surjective maps $E\to\!\!\to L$ with $L$ a line bundle on $C$ (cf. \cite{H}, chapter V). One has an exact sequence
$0\to N\to E\to L\to 0$, where $N\in\text{Pic}( C)$. The surjection $E\to\!\!\to L$ induces an inclusion 
$\Gamma=\mathbb P(L)\subset S=\mathbb P(E)$. If $L=\mathcal O_C(B), B\in\text{Div}( C)$ with $b=\text{deg}(B)$, then 
$b=H\cdot\Gamma$ and $\Gamma\sim H+f_N$ where $N=L\otimes\text{det}(E)^{\vee}\in\text{Pic} (C)$. For a section 
$\Gamma$ that corresponds to the exact sequence $0\to N\to E\to L\to 0$, we have that $\mathcal N_{\Gamma/S}$, the 
normal sheaf of $\Gamma$ in $S$, is such that 
$\mathcal N_{\Gamma/S}=\mathcal O_{\Gamma}(\Gamma)\simeq N^{\vee}\otimes L$. In particular 
$\text{deg}(\Gamma)=\Gamma^2=\text{deg}(L)-\text{deg}(N)$. If $\Gamma\sim H-f_D$, i.e. when $N=\mathcal O_C(D)$, 
then $|\mathcal O_S(\Gamma)|\simeq \mathbb P(H^0(E(-D)))$.

\noindent An element $\Gamma\in |H+f_D|$ is called {\it{unisecant curve}}, and irreducible unisecants are called {\it{sections}} of $S$. For any $m\in\mathbb N$, denote by $\text{Div}^{1,m}(S)$ 
the Hilbert scheme of unisecant curves of $S$ of degree $m$ with respect to $\mathcal O_S(1)$. Since 
elements of $\text{Div}^{1,m}(S)$ correspond to quotients of $E$,  therefore $\text{Div}^{1,m}(S)$ can be endowed with
a natural structure of Quot-Scheme (cf. \cite{S}, section 4.4), and one has an isomorphism (see e.g. \cite{CF1}, Section 2.4.) $$\Phi_{1,m}:\text{Div}^{1,m}(S)\xrightarrow{\simeq}\text{Quot}^C_{E,m-g+1},\hskip3mm\Gamma\to (E\to\!\!\to L)$$

\noindent Let $\Gamma\in\text{Div}^{1,m}(S)$ be. We say that:

\noindent (i). $\Gamma$ is {\it{linearly isolated}} if $\text{dim}(|\mathcal O_S(\Gamma)|)=0$;

\noindent (ii) $\Gamma$ is called {\it{special unisecant}} if $h^1(\Gamma,\mathcal O_S(1)\otimes\mathcal O_{\Gamma})>0$. 

\noindent Inside $\text{Div}^{1,m}(S)$, we consider the scheme $\mathcal S^{1,n}$ that parametrizes degree $n$, \textit{special unisecant curves of} $S$. 

\noindent Let $\mathcal F\subset\text{Div}^{1,n}(S)$ be a subscheme, and let $\Gamma$ be a special unisecant of $S$. Assume that $\Gamma\in\mathcal F$, where 
$\mathcal F\subset\text{Div}^{1,n}(S)$, $\Gamma$ is called {\it{specially isolated in $\mathcal F$}} if $\text{dim}_{\Gamma}(\mathcal F\cap\mathcal S^{1,n})=0$.

\section{Construction of components in the space of extensions.}
The following construction is a generalization of the one given in \cite{CR}, and this is an adapted argument due to Robert Lazarsfeld. 

\begin{lem} Fix an integer $n\geq 4$. Let $C$ be a non-hyperelliptic curve of genus $g\geq n+5$. Let $D_n$ be a general effective divisor of degree $n$ over $C$. There exists a vector bundle $E\rightarrow C$ satisfying the following properties:
\begin{description}
 \item[i] $rank(E)=2$;
 \item[ii] $det(E)=K_C(-D_n)$;
 \item[iii] $h^0(E)=3$;
 \item[iv] $E$ is globally generated;
 \item[v] $h^0(E^{\vee})=0$.
\end{description}
\end{lem}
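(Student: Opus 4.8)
The plan is to realize $E$ as the dual of a Lazarsfeld--Mukai kernel bundle attached to a general net inside the large linear system $|K_C(-D_n)|$. Set $A:=K_C(-D_n)$. Since $h^1(A)=h^0(\mathcal O_C(D_n))=1$ for a general $D_n$, Riemann--Roch gives $h^0(A)=g-n\geq 5$, so the genus bound guarantees that $|A|$ is large; because $C$ is non-hyperelliptic and $D_n$ is general, one checks that $A$ is base-point-free and that a general $3$-dimensional subspace $V\subseteq H^0(A)$ is base-point-free as well. I would then define $E:=M_V^{\vee}$, where $M_V$ is the rank-two kernel bundle of the evaluation sequence
\begin{equation}\label{eq:ev}
0\to M_V\to V\otimes\mathcal O_C\xrightarrow{\ \mathrm{ev}\ }A\to 0 .
\end{equation}
Properties (i), (ii), (iv) and (v) are read off directly. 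From \eqref{eq:ev} one has $\mathrm{rank}(M_V)=\dim V-1=2$, hence $\mathrm{rank}(E)=2$, and taking determinants gives $\det M_V=A^{\vee}$, so $\det E=A=K_C(-D_n)$; this is (i) and (ii). Dualizing \eqref{eq:ev} yields
\begin{equation}\label{eq:dual}
0\to A^{\vee}\to V^{\vee}\otimes\mathcal O_C\to E\to 0,
\end{equation}
whose surjection on the right exhibits $E$ as globally generated by the three sections in $V^{\vee}$, which is (iv); and $H^0(E^{\vee})=H^0(M_V)=\ker\bigl(V\hookrightarrow H^0(A)\bigr)=0$ from \eqref{eq:ev}, which is (v).

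It remains to prove (iii), and this is where the real work lies. Since $\deg A^{\vee}<0$ we have $H^0(A^{\vee})=0$, so the long exact sequence of \eqref{eq:dual} gives $h^0(E)=3+\dim\ker\delta$, where $\delta\colon H^1(A^{\vee})\to V^{\vee}\otimes H^1(\mathcal O_C)$ is the connecting map. By Serre duality $\delta$ is dual to the multiplication map
\begin{equation}\label{eq:mult}
\mu_V\colon V\otimes H^0(K_C)\longrightarrow H^0(K_C\otimes A)=H^0(2K_C-D_n),
\end{equation}
so that $h^0(E)=3+\dim\mathrm{coker}(\mu_V)$. As $\deg(2K_C-D_n)>2g-2$, the target of \eqref{eq:mult} is non-special of dimension $3g-3-n$, while the source has dimension $3g$; thus \eqref{eq:mult} can be surjective, and (iii) is equivalent to the surjectivity of $\mu_V$ for a general pair $(D_n,V)$.

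The surjectivity of $\mu_V$ is the main obstacle. I would establish it by semicontinuity: surjectivity is an open condition on the (irreducible) parameter space of pairs $(D_n,V)$, so it suffices to exhibit a single reduced divisor $D_n=p_1+\cdots+p_n$ in general position together with a net $V$ for which $\mu_V$ is onto. Viewing $V\subset H^0(K_C-D_n)\subset H^0(K_C)$ as canonical forms vanishing along $D_n$ and $H^0(2K_C-D_n)\subset H^0(2K_C)$ as quadric sections of the canonical image through $D_n$, the map $\mu_V$ is the restriction of the multiplication $H^0(K_C)\otimes H^0(K_C)\to H^0(2K_C)$, which is surjective precisely because $C$ is non-hyperelliptic (Max Noether). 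One then checks, by letting the $p_i$ degenerate to general points and tracking the conditions they impose, that multiplication by a general net $V$ already fills the subspace of quadric sections vanishing on $D_n$; equivalently, that $\dim\mathrm{coker}(\mu_V)=0$. The delicate point is exactly that passing from the full system $H^0(K_C-D_n)$ to a $3$-dimensional $V$ must not destroy surjectivity, and this is what forces both the generality of $D_n$ and of $V$ and the margin $g\geq n+5$ (equivalently $h^0(A)\geq 5$), which leaves enough canonical quadrics available to span $H^0(2K_C-D_n)$.
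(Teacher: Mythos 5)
Your reduction is correct as far as it goes: with $A=K_C(-D_n)$ and a base-point-free net $V\subset H^0(A)$, the bundle $E=M_V^{\vee}$ does satisfy (i), (ii), (iv), (v), and the long exact sequence of the dualized evaluation sequence together with Serre duality gives exactly $h^0(E)=3+\dim\mathrm{coker}(\mu_V)$, so (iii) is equivalent to surjectivity of $\mu_V\colon V\otimes H^0(K_C)\to H^0(2K_C-D_n)$. The problem is that this surjectivity---which you yourself flag as where the real work lies---is never proved: what you offer is an appeal to Max Noether's theorem plus an unspecified degeneration (``letting the $p_i$ degenerate to general points and tracking the conditions''). Noether's theorem concerns the full multiplication $H^0(K_C)\otimes H^0(K_C)\to H^0(2K_C)$ and gives no control once one factor is cut down to a $3$-dimensional subspace. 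Indeed, for any pencil $W\subset V$ the base-point-free pencil trick gives $\ker(\mu_W)\cong H^0(\mathcal O_C(D_n))$, so $\dim\mathrm{Im}(\mu_W)=2g-1$, which falls short of the target by $g-n-2>0$; the whole statement therefore rides on the third section of $V$ exactly filling this $(g-n-2)$-dimensional deficit, and you provide no mechanism for that. Worse, the missing statement is not merely an ingredient but is \emph{equivalent} to the Lemma: conversely, given any $E$ as in the Lemma, properties (iii)--(v) produce a net $V=\mathrm{Im}\bigl(H^0(E)^{\vee}\hookrightarrow H^0(K_C-D_n)\bigr)$ with $E\cong M_V^{\vee}$ and hence $\mathrm{coker}(\mu_V)=0$. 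So, as written, your plan is circular in effect: the one step you postpone carries the entire content of the Lemma.

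For contrast, the paper sidesteps this by constructing $E$ directly as an extension $0\to\mathcal O_C(D_k)\to E\to K_C(-D_n-D_k)\to 0$ with $k=g-2-n$ and $D_k$ general. There, a dimension count of the space of extensions with vanishing coboundary map (computed via the base-point-free pencil trick applied to the pencil $K_C(-D_n-D_k)$) shows this locus has dimension $k$ and consists of bundles with $h^0(E)=3$; a second, parallel count shows that failure of global generation occurs in codimension at least one inside it; and (v) follows from the genus bound. Note that the paper then uses the resulting $E$ to deduce (its Proposition 3.1) precisely a statement of the type you need about a general net $V$---i.e., the implication runs in the direction opposite to yours. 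To salvage your approach you would have to give an independent proof that some base-point-free net $V\subset H^0(K_C-D_n)$ has $\mu_V$ surjective; until then there is a genuine gap at step (iii).
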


\begin{proof}
Set $k=g-2-n$ and let $D_k$ be a general effective divisor of degree $k$. Let $Ext^1(K_C(-D_n-D_k),\mathcal{O}_C(D_k))$ be the space parametrizing extensions of the form
$$u:0\rightarrow \mathcal{O}_C(D_k)\rightarrow E\rightarrow K_C(-D_n-D_k) \rightarrow 0.$$

\noindent Since $Ext^1(K_C(-D_n-D_k),\mathcal{O}_C(D_k))=H^1(D_k-K_C+D_n+D_k)=H^0(2K_C-D_n-2D_k)^{\vee}$, then $dim(Ext^1(K_C(-D_n-D_k),\mathcal{O}_C(D_k)))=3g-3-n-2k=3g-3-n-2g+4+2n=g+n+1$. Note that for such extensions $u$, the corresponding bundles $E$ satisfy $rank(E)=2$ and $det(E)=K_C(-D_n)$. Consider the coboundary map  
$$\partial: Ext^1(K_C(-D_n-D_k),\mathcal{O}_C(D_k)) \rightarrow H^0(K_C(-D_n-D_k))^{\vee}\otimes H^1(\mathcal{O}_C (D_k))$$ which associates to an extension $u$ its coboundary map in cohomology $$\partial_u: H^0(K_C(-D_n-D_k))\rightarrow H^1(\mathcal{O}_C(D_k)).$$

\noindent For these extensions $u\in Ker(\partial)$ we have that $h^0(E)=3$. Since the map $\partial$ is dual to the map
$$\mu:H^0(K_C(-D_n-D_k))\otimes H^0(K_C(-D_k))\rightarrow Ext^1(K_C(-D_n-D_k),\mathcal{O}_C(D_k))^{\vee},$$

\noindent then $dim(Ker(\partial))=coker(\mu)$. By the base-point-free pencil trick to the pencil defined by $K_C(-D_n-D_k)$, we have that $Ker(\mu)\cong H^0(K_C(-D_k)-(K_C(-D_n-D_k)))=H^0(\mathcal{O}_C(D_n))$, since $h^0(\mathcal{O}_C(D_n))=1$, then $dim(Ker(\mu))=1$ and $$dim(Ker(\partial))=coker(\mu)=(g+n+1)-(2(g-k)-1)=g-n-2=k.$$ 

\noindent Consider now $u\in Ker(\partial)$. For every $x\in C$, $h^0(E(-x))\geq 1$. If $x\notin Supp(D_k)$, then tensoring $u$ by $\mathcal{O}(-x)$ we get
$$0\rightarrow \mathcal{O}_C(D_k-x)\rightarrow E(-x)\rightarrow K_C(-D_n-D_k-x) \rightarrow 0.$$
Then $h^0(E(-x))\leq h^0(\mathcal{O}_C(D_k-x))+h^0(K_C(-D_n-D_k-x))=1$ and the equality holds. Then $E$ could fail to be generated at most at the points in $Supp(D_k)$.

\noindent Suppose $E$ is not generated at $x\in Supp(D_k)$, then the extension $u$ induces an extension
$$u':0\rightarrow \mathcal{O}_C(D_k-x)\rightarrow E'\rightarrow K_C(-D_n-D_k) \rightarrow 0,$$

\noindent such that $$u'\in Ker(\partial':Ext^1(K_C(-D_n-D_k),\mathcal{O}_C(D_k-x)) \rightarrow H^0(K_C(-D_n-D_k))^{\vee}\otimes H^1(\mathcal{O}_C(D_k-x)).$$ 

\noindent Note that $Ext^1(K_C(-D_n-D_k),\mathcal{O}_C(D_k-x))=H^1(D_k-x-K_C+D_n+D_k)=H^0(2K_C(-D_n-2D_k+x))^{\vee}$, then $dim(Ext^1(K_C(-D_n-D_k),\mathcal{O}_C(D_k-x)))=h^0(2K_C-D_n-2D_k+x)=3g-3-n-2k+1=3g-3-n-2g+4+2n+1=g+n+2$.

\noindent The coboundary map $\partial'$ is dual to
$$\mu':H^0(K_C(-D_n-D_k))\otimes H^0(K_C(-D_k+x))\rightarrow Ext^1(K_C(-D_n-D_k),\mathcal{O}_C(D_k-x))^{\vee},$$

\noindent then $dim(Ker(\partial'))=coker(\mu)$ and by the base-point-free pencil trick, $Ker(\mu')\cong H^0(K_C(-D_k+x)-K_C(-D_n-D_k))=H^0(\mathcal{O}_C(D_n+x))$, then $dim(Ker(\mu'))=1$ and $$dim(Ker(\partial'))=coker(\mu')=(g+n+2)-(2(g-k+1)-1)=g-n-3,$$
\noindent thus, the locus of bundles which are not generated has codimension at least one inside $Ker(\partial)$, so there exists a bundle $E$ which is globally generated.

\noindent Suppose there is a non-zero section $s\in H^0(E^{\vee})$. Then $s$ induces an injection $\mathcal{O}_C\hookrightarrow E^{\vee}$ by tensoring by $K_C(-D_n)$ and using the isomorphism $E\cong E^{\vee}\otimes K_C(-D_n)$ we get
$K_C(-D_n)\hookrightarrow E$ and $h^0(K_C(-D_n))\leq h^0(E)$ giving a contradiction with our hypothesis on the genus.
\end{proof}
     
\begin{prop}
Fix an integer $n\geq 3$. Let $C$ be a non-hyperelliptic curve of genus $g\geq n+5$, and let $D_n$ be a general effective divisor of degree $n$ over $C$. For the general $V\in G(3,H^0(K_C-D_n))$, the multiplication map
$$\mu_V:V\otimes H^0(K_C-D_n)\rightarrow H^0(2K_C-2D_n)$$
has kernel of dimension three.
\end{prop}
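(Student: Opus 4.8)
The plan is to use that the multiplication map is symmetric, so that the antisymmetric tensors are forced into the kernel, and then to show that for a general $V$ these are the only relations. Throughout write $L:=K_C-D_n$, so $\deg L=2g-2-n$. Since $C$ is general, $D_n$ is general, and $g\geq n+5$, Riemann--Roch and the vanishing of the relevant $H^1$'s (namely $h^0(D_n)=1$, $h^0(2D_n-K_C)=0$ and $h^0(D_n-K_C)=0$) give $h^0(L)=g-n$, $h^0(2L)=3g-3-2n$ and $h^0(2K_C-D_n)=3g-3-n$. Writing $V=\langle s_1,s_2,s_3\rangle$, the three classes $s_i\otimes s_j-s_j\otimes s_i$ lie in $\ker\mu_V$ because $\mu_V(s_i\otimes s_j)=s_is_j=\mu_V(s_j\otimes s_i)$, and they are linearly independent; hence $\dim\ker\mu_V\geq 3$ for \emph{every} $V$, and the whole content is the reverse inequality for general $V$.

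Next I would pass to the Lazarsfeld kernel bundle. For general $V$ the system $L$ is globally generated (as $C$ is non-hyperelliptic and $D_n$ is general) and a general net $V$ is base-point-free, so there is an exact sequence
\[ 0\to M_V\to V\otimes\mathcal{O}_C\to L\to 0 \]
with $M_V$ a rank-two bundle and $\det M_V=L^{\vee}$. Tensoring by $L$ and taking global sections identifies $\ker\mu_V\cong H^0(M_V\otimes L)$, so the task becomes the computation $h^0(M_V\otimes L)=3$. A direct count gives $\chi(M_V\otimes L)=-n$, and Serre duality together with $(M_V\otimes L)^{\vee}\cong M_V$ (using $M_V^{\vee}\cong M_V\otimes L$) yields $h^1(M_V\otimes L)=h^0(K_C\otimes M_V)$. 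Tensoring the defining sequence by $K_C$ and taking $H^0$ shows that $h^0(K_C\otimes M_V)=\dim\ker m_V$, where
\[ m_V:V\otimes H^0(K_C)\to H^0(2K_C-D_n) \]
is the multiplication map. Thus $h^0(M_V\otimes L)=3$ \emph{if and only if} $m_V$ is surjective: indeed surjectivity forces $\dim\ker m_V=3g-(3g-3-n)=n+3$, whence $h^1(M_V\otimes L)=n+3$ and $h^0(M_V\otimes L)=\chi+h^1=3$. So the Proposition is reduced to the surjectivity of $m_V$ for general $V$.

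To prove surjectivity I would use that it is an open condition on $G(3,H^0(L))$, so it suffices to exhibit a single good $V_0$. Choose $s_1,s_2$ a general, hence base-point-free, pencil in $H^0(L)$; these generate $L$, and applying the base-point-free pencil trick (the Koszul sequence of $(s_1,s_2)$ twisted by $K_C$) produces
\[ 0\to \mathcal{O}_C(D_n)\to K_C^{\oplus 2}\xrightarrow{(s_1,s_2)} 2K_C-D_n\to 0. \]
Taking cohomology and using $h^0(D_n)=1$ gives $\dim\bigl(s_1H^0(K_C)+s_2H^0(K_C)\bigr)=2g-1$, so the cokernel $Q$ of this subspace inside $H^0(2K_C-D_n)$ has dimension $g-2-n$ and injects into $H^1(D_n)$. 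It then remains to pick $s_3\in H^0(L)$ so that $s_3\,H^0(K_C)$ maps onto $Q$, for then $V_0=\langle s_1,s_2,s_3\rangle$ makes $m_{V_0}$ surjective.

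I expect the main obstacle to be precisely this last surjection onto the $(g-2-n)$-dimensional space $Q$, equivalently showing that the induced map $H^0(K_C)\to Q$ attains maximal rank for a general $s_3$. This is where the generality of $C$ and $D_n$ is essential: one must rule out unexpected coincidences of divisors that would place extra products $s_3t$ back inside $s_1H^0(K_C)+s_2H^0(K_C)$, i.e. that would force the coboundary in the sequence of the previous paragraph to be non-injective. I would attack it either by a second base-point-free pencil trick, now inside $Q$, or by an incidence/dimension count over the choices of $s_3$ that reduces the statement to the non-degeneracy of a Gaussian-type multiplication on the general curve; along the way one should record the base-point-freeness of $L$ and the $H^1$-vanishings, all of which rest on the hypothesis $g\geq n+5$.
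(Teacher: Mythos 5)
Your reductions are all correct, and are nicely done: the antisymmetric tensors give $\dim\ker\mu_V\geq 3$ for every $V$; for a base-point-free net $V$ the kernel-bundle sequence $0\to M_V\to V\otimes\mathcal{O}_C\to L\to 0$ identifies $\ker\mu_V$ with $H^0(M_V\otimes L)$; and the self-duality $(M_V\otimes L)^{\vee}\cong M_V$ (valid because $\det M_V=L^{\vee}$), together with $\chi(M_V\otimes L)=-n$, correctly converts the statement into the surjectivity of $m_V\colon V\otimes H^0(K_C)\to H^0(2K_C-D_n)$. But the argument stops exactly where the content of the Proposition lies: you never prove that some $V_0$ (equivalently, some $s_3$ for which $H^0(K_C)$ surjects onto the $(g-2-n)$-dimensional quotient $Q$) makes $m_{V_0}$ surjective. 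Your final paragraph is a list of strategies (``I would attack it either by \dots or by \dots''), not a proof, and this maximal-rank statement is not a formality: it is a non-degeneracy assertion about multiplication maps on a general curve, and the bare inequality $g>g-2-n$ does not yield it. As written, the proposal is a correct reduction plus an unproven key step --- a genuine gap.

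It is worth noting that the missing ingredient is precisely what the paper's Lemma 3.1 supplies, and that your own framework recovers the paper's proof once that lemma is invoked. Lemma 3.1 produces a globally generated rank-two bundle $E$ with $\det E=K_C(-D_n)$, $h^0(E)=3$ and $h^0(E^{\vee})=0$; dualizing its evaluation sequence $0\to L^{\vee}\to H^0(E)\otimes\mathcal{O}_C\to E\to 0$ gives a base-point-free net $V=\mathrm{Im}\bigl(H^0(E)^{\vee}\hookrightarrow H^0(L)\bigr)$ whose kernel bundle is $M_V\cong E^{\vee}$, so that $M_V\otimes L\cong E^{\vee}\otimes K_C(-D_n)\cong E$ and $\ker\mu_V\cong H^0(E)$ has dimension exactly three; this is the paper's proof, carried out directly without passing through $m_V$ and its surjectivity. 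If you replace your pencil-trick construction of $V_0$ by this choice of $V$ (or prove an analogue of Lemma 3.1, which is itself a nontrivial coboundary/pencil-trick argument in a space of extensions), your argument closes; without it, the proposal does not establish the Proposition.
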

\begin{proof}
By uppersemicontinuity it is enough to show there exists an element $V\in G(3,H^0(K_C-D_n))$ such that $dim(Ker(\mu_V))=3$. Consider the bundle $E$ from the previous Lemma. Since $E$ is globally generated we have the exact sequence
$$0\rightarrow (K_C(-D_n))^{\vee}\rightarrow H^0(E)\otimes \mathcal{O}_C\rightarrow E\rightarrow 0,$$
by dualizing this sequence and computing the cohomology, we get an injective map $H^0(E)^{\vee}\hookrightarrow H^0(K_C-D_n)$. Denote by $V$ the image of this injective map. By twisting the dual of the above sequence by $K_C-D_n$ and identifying $E^{\vee}\otimes K_C(-D_n)\cong E$, we have a cohomology sequence
$$0\rightarrow H^0(E)\rightarrow V\otimes H^0(K_C-D_n)\rightarrow H^0(2K_C-2D_n)$$
and the kernel of $\mu_V$ is identified with $H^0(E)$, which is of dimension three. This completes the proof.
\end{proof}

Let $X$ be a smooth curve contained in a projective space $\mathbb P$. For any positive integer $h$, denote by $\text{Sec}_h(X)$, the $h^{th}$-secant variety of $X$, defined as the closure of the union of all linear subspaces $<\phi(D)>\subset\mathbb P$, for all effective general divisors of degree $h$. One has that $\text{dim (Sec}_h(X))=\text{min}\{\text{dim}(\mathbb P),2h-1\}$ (see \cite{L}). 

\begin{thm}\label{mainthm}
Let $C$ be a non-hyperelliptic curve of genus $g\geq 10$ with general moduli. Let $3 \leq n\leq g-4$ be an integer. There exist closed irreducible loci $\mathcal B_n \subset B^4(2,K_C)$ of dimension $3g-10-n$ whose general point corresponds to a stable bundle $E$ fitting in an exact sequence of the form
$$0\rightarrow \mathcal{O}_C(D_n)\rightarrow E \rightarrow K_C(-D_n)\rightarrow 0,$$
where $D_n$ is a general effective divisor of degree $n$. In particular for $n=3$, $\mathcal B_3$ is a regular component of $B^4(2,K_C)$ and the loci $B_3\supset B_4\supset\cdots \supset B_n$ stratify $B_3$.\end{thm}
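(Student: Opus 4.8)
The plan is to realize each $\mathcal{B}_n$ as the image, under the extension-to-bundle map $\pi$, of an irreducible determinantal locus built inside the global space of extensions, and then to read off the chain $\mathcal{B}_{n+1}\subset\mathcal{B}_n$ from the jumping of the degree of a sub-line bundle. First I fix a general $D_n$ and work in $\mathbb{P}Ext^1$ with $L=K_C(-D_n)$, $N=\mathcal{O}_C(D_n)$, so that $\det E_u=N\otimes L=K_C$; one computes $\dim\mathbb{P}Ext^1=h^0(2K_C-2D_n)-1=3g-4-2n$, and from the coboundary sequence $h^0(E_u)=1+\dim\ker(\partial_u)$, so $E_u\in B^4(2,K_C)$ is equivalent to $\mathrm{coker}(\partial_u)\geq 3$, i.e. to $u\in\mathcal{W}_3$. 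The essential point in the count is that, because $L=K_C-D_n$, Serre duality turns $\partial_u$ into a symmetric bilinear form on $H^0(K_C-D_n)$ (the triple cup product $s\cup u\cup s'$ lands in $H^1(K_C)=\mathbb{C}$ and is symmetric in $s,s'$); hence $\{\mathrm{corank}\geq 3\}$ has the codimension $\binom{3+1}{2}=6$ of a symmetric, not a general, determinantal condition. Equivalently — and this is where the Proposition enters — I use the incidence variety $I=\{([u],V):\mathrm{Im}(\mu_V)\subseteq H_u\}\subset\mathbb{P}Ext^1\times G(3,H^0(K_C-D_n))$: for general $V$ the Proposition gives $\dim\ker\mu_V=3$, so the fibre of $I\to G(3,H^0(K_C-D_n))$ over $V$ is $\mathbb{P}(\mathrm{Im}\,\mu_V)^{\perp}$ of dimension $n-1$, whence $\dim I=3(g-n-3)+(n-1)=3g-2n-10$; since $\mathrm{coker}(\partial_u)=3$ forces the unique choice $V=\mathrm{Im}(\partial_u)^{\perp}$, the projection $I\to\mathbb{P}\mathcal{W}_3$ is generically injective and $\dim\mathbb{P}\mathcal{W}_3=3g-2n-10=(3g-4-2n)-6$.

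Next I let $D_n$ vary over the irreducible $C^{(n)}$. The incidence $I$ fibres over $C^{(n)}$ with irreducible fibres (each in turn fibred over the irreducible Grassmannian with generic fibre a projective space, using the Proposition to fix the fibre dimension), so the total determinantal locus $\Delta_n\subset\mathbb{P}(\mathcal{E})$ is irreducible of dimension $n+(3g-2n-10)=3g-10-n$. I then study $\pi:\Delta_n\to B^4(2,K_C)$, $[u]\mapsto[E_u]$, checking two things. Stability of the general $E_u$: any destabilizing $M\hookrightarrow E_u$ must map nontrivially to $K_C(-D_n)$ (otherwise $M\subseteq\mathcal{O}_C(D_n)$ and $\deg M\leq n<g-1$), and the generality of $(D_n,u)$ in the irreducible $\Delta_n$ excludes $g-1\leq\deg M\leq 2g-2-n$, since a high-degree sub-line bundle confines the extension class to a proper closed sublocus. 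Generic finiteness of $\pi$: for general $E_u$ the sub-line bundle $\mathcal{O}_C(D_n)$ is recovered from $E$ as a linearly/specially isolated section of $\mathbb{P}(E)$, so $D_n$, and then $[u]$, are determined up to finitely many choices. Hence $\mathcal{B}_n:=\overline{\pi(\Delta_n)}$ is irreducible of dimension $3g-10-n$ with general member an extension of the asserted shape; for $n=3$ this reproduces \cite{CR}, where $\mathcal{B}_3$ is a regular (generically smooth, expected-dimensional) component of dimension $3g-13$.

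For the stratification, note $\dim\mathcal{B}_{n+1}=\dim\mathcal{B}_n-1$, so the goal is to exhibit $\mathcal{B}_{n+1}$ inside $\mathcal{B}_n$ as a divisor, i.e. to show that a general $E\in\mathcal{B}_{n+1}$ also carries a saturated sub-line bundle $\mathcal{O}_C(D_n)$ of degree $n$ with line-bundle quotient $K_C(-D_n)$. Here $h^0(E)=4$ is spanned by the section with zero divisor $D_{n+1}$ together with the lifts of $V_{n+1}=\ker\partial_u\subset H^0(K_C-D_{n+1})$, and a saturated degree-$m$ sub-line bundle is precisely a section of $E$ whose zero divisor has degree $m$. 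I therefore analyse how the degree of the zero divisor of a section varies over $\mathbb{P}H^0(E)$: through the evaluation rational map $C\dashrightarrow G(2,H^0(E))=G(2,4)$, a section vanishing on a degree-$m$ divisor is a multisecant condition on the image curve, governed by the estimate $\dim\mathrm{Sec}_h(X)=\min\{\dim\mathbb{P},2h-1\}$ recalled above. This shows that sections with at least $n$ zeros form a nonempty family whose general member has exactly $n$ zeros (the jump from $n$ to $n+1$ being codimension one), producing $\mathcal{O}_C(D_n)\hookrightarrow E$; since $h^0(E)=4$ the associated class lies in $\mathcal{W}_3$ and, by the irreducibility above, in $\Delta_n$, so $E\in\mathcal{B}_n$ and the chain $\mathcal{B}_3\supset\mathcal{B}_4\supset\cdots\supset\mathcal{B}_n$ stratifies $\mathcal{B}_3$.

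I expect the genuine difficulty to be concentrated in this last step: controlling the degree of the zero divisor of sections of $E$ uniformly over $\mathcal{B}_{n+1}$, guaranteeing that the degree-$n$ sub-line bundle so produced has quotient a line bundle of the correct class, and ensuring that the resulting extension lands in the intended irreducible component $\Delta_n$ rather than in another component of $\mathcal{W}_3$. By comparison the stability bound and the generic finiteness of $\pi$ are routine, and the dimension bookkeeping is forced once the symmetric nature of $\partial_u$ (equivalently, the Proposition) is in hand.
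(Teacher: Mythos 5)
Your first two paragraphs are, in substance, the paper's own proof: the incidence variety $\mathcal{J}\subset G(3,H^0(K_C-D_n))\times\mathbb{P}Ext^1$ together with the Proposition to pin the fibre dimension at $n-1$ (giving $\dim = 3g-2n-10$), the Serre-duality symmetry of $\partial_u$ to make the second projection generically finite, the Lange--Narasimhan secant-variety criterion for stability of the general $E_u$, recovery of $\mathcal{O}_C(D_n)$ from $E$ via $h^0(E(-D_n))=1$ (linear/special isolation) for generic injectivity of $\pi$, and the extra $+n$ from letting $D_n$ move in $C^{(n)}$. This matches the paper's Lemmas 3.2--3.4.

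The genuine gap is in your last step, the inclusion $\mathcal{B}_{n+1}\subset\mathcal{B}_n$, and it is exactly where you admit the difficulty lies. You try to produce, on a general $E\in\mathcal{B}_{n+1}$ \emph{itself}, a section vanishing on exactly $n$ points. First, the assertion that ``sections with at least $n$ zeros form a nonempty family whose general member has exactly $n$ zeros'' is unsupported: the secant-variety dimension formula you invoke says nothing of the sort, and the naive count runs against you --- in $\mathbb{P}H^0(E)=\mathbb{P}^3$, vanishing on a moving degree-$n$ divisor imposes $2n$ conditions against $n$ parameters, so the expected dimension is $3-n$, which is negative for $n\geq 4$; the locus of sections with many zeros may therefore consist of the single point $[s_0]$, whose zero divisor has degree $n+1$, not $n$. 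Second, you are attempting a statement strictly stronger than what the theorem needs: $\mathcal{B}_n$ is the \emph{closure} of $\pi(\widetilde{\Delta}_n)$, so $\mathcal{B}_{n+1}\subset\mathcal{B}_n$ only requires the general $E\in\mathcal{B}_{n+1}$ to be a \emph{limit} of bundles with exactly-$n$-zero sections; since the vanishing degree of a section is only semicontinuous, such limits can (and plausibly do) lose their degree-$n$ sections, acquiring only the degree-$(n+1)$ one. The paper's Lemma 3.5 is structured precisely to avoid your question: it stays on the extension side, writing $D_{n+1}=D_n+q$, using the inclusion $H^0(K_C-D_{n+1})\subset H^0(K_C-D_n)$ and the induced inclusion of Grassmannians to transport $V=\ker(\partial_u)$, and the dominance (hence, by properness, surjectivity) of $\pi_1|_{J_n}$ onto $G(3,H^0(K_C-D_n))$ to place the relevant data inside $J_n$ and conclude membership in the closed locus $\mathcal{B}_n$ --- at no point does it require $E$ to carry a section with exactly $n$ zeros. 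Your further worry about landing in the intended component $\Delta_n$ rather than another component of $\mathcal{W}_3$ is also real, but secondary: even granting it, the nonexistence issue above already breaks the argument, so as written the chain $\mathcal{B}_3\supset\mathcal{B}_4\supset\cdots\supset\mathcal{B}_n$ is not established by your proposal.
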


We recall that the component $\mathcal B_3$ was constructed in \cite{CR}, thus we need only to construct the components $\mathcal B_n$ for $n\geq 4$. To do this we start by showing the following lemma

\begin{lem} For every integer $n\geq 3$, inside the locus $\mathcal W_3$ there exist loci 
$\Delta_n\subset\mathcal W_3$, such that $\Delta_n$ is an irreducible component of dimension $3g-10-2n$.
\end{lem}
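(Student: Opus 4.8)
The plan is to identify $\mathcal W_3$ with a symmetric determinantal variety and to single out $\Delta_n$ as the component passing through a point at which this variety is smooth of the expected dimension. Set $V:=H^0(K_C-D_n)$; by Riemann--Roch and the generality of $C$ and $D_n$ one has $\text{dim}(V)=g-n$ and $\text{dim}(Ext^1)=h^0(2K_C-2D_n)=3g-3-2n$, the latter being nonspecial. The essential observation is that here $N=\mathcal O_C(D_n)$ forces $L=K_C-N=K_C(-D_n)$, so the two factors of $\mu$ coincide: the cup product attached to $u$ is the \emph{symmetric} bilinear form $q_u(s,t)=u(\nu(s\cdot t))$ on $V$, where $\nu:\text{Sym}^2V\to H^0(2K_C-2D_n)$ is the multiplication map. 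Thus $u\mapsto q_u$ is the linear map $\nu^{\vee}$, and $\mathcal W_3=\{[u]:\text{corank}(q_u)\geq 3\}$ is the $\nu^{\vee}$-preimage of the corank-$\geq 3$ locus of symmetric forms on $V$. This symmetric structure is what makes the arithmetic come out right: the naive (non-symmetric) determinantal count would give the wrong expected dimension.

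First I would record the expected dimension. The corank-$\geq 3$ symmetric determinantal locus has codimension $\binom{4}{2}=6$, so every component of its preimage $\mathcal W_3$ has codimension at most $6$, i.e. dimension at least $3g-10-2n$ in $\mathbb P Ext^1$; this gives the lower bound at no cost. To produce an irreducible candidate of exactly this dimension I would form, over the Grassmannian $G:=G(3,V)$, the incidence variety $\{(W,[u]):\text{Im}(\mu_W)\subseteq H_u\}$. By the Proposition there is a dense open $U\subseteq G$ on which $\text{dim}(\text{Ker}(\mu_W))=3$, hence $\text{dim}(\text{Im}(\mu_W))=3(g-n)-3$ and the fibre over such a $W$ is $\mathbb P(\text{Im}(\mu_W)^{\perp})\cong\mathbb P^{n-1}$; the preimage of $U$ is therefore irreducible of dimension $3(g-n-3)+(n-1)=3g-10-2n$. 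I set $\Delta_n$ to be the closure of its image in $\mathbb P\mathcal W_3$, an irreducible subvariety of dimension at most $3g-10-2n$ whose general point $[u_0]$ arises from a general pair $(W,[u_0])$ with $u_0\perp\text{Im}(\mu_W)$, so that $W\subseteq\text{Ker}(q_{u_0})$.

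The decisive input is a tangent space computation, and the first thing to secure is that the general $[u_0]\in\Delta_n$ has $\text{corank}(q_{u_0})$ \emph{equal} to $3$; equivalently that $W=\text{Ker}(q_{u_0})$ is recovered from $[u_0]$, so that the projection to $\mathbb P\mathcal W_3$ is generically injective and $\text{dim}(\Delta_n)=3g-10-2n$. I expect this exact-corank genericity to be the main obstacle: one must check that for general $W$ and general $u_0\in\text{Im}(\mu_W)^{\perp}$ no extra radical appears. I would do this by exhibiting a single such $u_0$, equivalently a bundle $E_{u_0}$ with $h^0(E_{u_0})=1+\text{corank}(q_{u_0})=4$, and invoking upper semicontinuity, together with the estimate that the corank-$\geq 4$ locus $\mathcal W_4$ has strictly smaller dimension (its incidence over $G(4,V)$ has far smaller fibres, the perpendicular spaces collapsing once $\text{Im}(\mu_W)$ fills $H^0(2K_C-2D_n)$).

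Granting this, at a form $q_{u_0}$ of corank exactly $3$ with radical $W$ the affine tangent space to $\mathcal W_3$ is $\{u:q_u|_{W\times W}=0\}=\nu(\text{Sym}^2W)^{\perp}$. Here the Proposition enters decisively: the statement that $\text{Ker}(\mu_W)=\wedge^2W$ is purely Koszul forces $\nu|_{\text{Sym}^2W}$ to be injective, since $\wedge^2W\cap\text{Sym}^2W=0$ inside $W\otimes V$; hence $\text{dim}(\nu(\text{Sym}^2W))=6$ and the tangent space has dimension $(3g-3-2n)-6=3g-9-2n$, that is $3g-10-2n$ projectively. Thus $\mathbb P\mathcal W_3$ is smooth of dimension $3g-10-2n$ at $[u_0]$, so the unique component through $[u_0]$ has exactly this dimension; since $\Delta_n$ is irreducible, contains $[u_0]$, and has dimension at most $3g-10-2n$, it coincides with that component. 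This proves $\Delta_n$ is an irreducible component of $\mathcal W_3$ of dimension $3g-10-2n$. The two points requiring care are the exact-corank-$3$ genericity just flagged and the passage from the determinantal codimension bound to the smoothness conclusion, both of which hinge on the generality of $D_n$ and on the Proposition.
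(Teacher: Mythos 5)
Your construction of $\Delta_n$ coincides with the paper's: both of you form the incidence variety $\{(W,[u]):\mathrm{Im}(\mu_W)\subseteq H_u\}$ over $G(3,H^0(K_C-D_n))$, invoke the Proposition to see that the fibre over a general $W$ is $\mathbb P\bigl(\mathrm{Im}(\mu_W)^{\perp}\bigr)\simeq\mathbb P^{n-1}$ (codimension of $\mathrm{Im}(\mu_W)$ equal to $n$), obtain an irreducible locus of dimension $3(g-n-3)+(n-1)=3g-2n-10$ dominating the Grassmannian, take $\Delta_n$ to be the closure of its image in $\mathbb P\,\mathrm{Ext}^1$, and use the symmetry of $\partial_u$ (your $q_u$) to see that $W$ can only be $\ker(\partial_u)$, so the projection is generically finite. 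Where you genuinely depart from the paper is in justifying that $\Delta_n$ is a \emph{component} of $\mathcal W_3$. The paper simply asserts this once it has the dimension count; you prove it, by computing the tangent space at a general $[u_0]$: identifying $T_{[u_0]}\mathcal W_3$ with $\nu(\mathrm{Sym}^2W)^{\perp}$, and deducing injectivity of $\nu|_{\mathrm{Sym}^2W}$ from $\ker(\mu_W)=\wedge^2W$ (the Proposition again, since $\mathrm{Sym}^2W\cap\wedge^2W=0$ in $W\otimes V$), so that $\mathcal W_3$ is smooth at $[u_0]$ of dimension exactly $(3g-4-2n)-\binom{4}{2}=3g-10-2n$. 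This is a real improvement: it supplies the step the paper leaves implicit, and it makes visible why the symmetric determinantal codimension $\binom{4}{2}=6$, rather than the generic one, is the relevant expected codimension. (You correctly omit the stability discussion via secant varieties and \cite{LN} that the paper folds into this proof; it is not needed for the statement of this lemma, only later for the map to $B^4(2,K_C)$.)

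The one step you flag but do not complete --- that a general $[u_0]\in\Delta_n$ has corank exactly $3$, equivalently $h^0(E_{u_0})=4$, so that $W$ is recovered as the radical of $q_{u_0}$ --- is indeed the crux, and you should know that the paper does not prove it either: its phrase ``$\ker(\partial_u)$ is uniquely determined'' tacitly assumes $\dim\ker(\partial_u)=3$ at the general point of $\Lambda_n$; if the corank were $c\geq 4$ along all of $\Lambda_n$, the fibre of $\pi_2|_{J_n}$ would be contained in $G(3,\ker\partial_u)$ but could be positive-dimensional, and both the finiteness and your tangent-space computation would break down. Your proposed repair (exhibit one $u_0$ with $h^0(E_{u_0})=4$ and conclude by semicontinuity) has the right shape --- it is how the case $n=3$ is handled in \cite{CR}, and the bundle of Lemma 3.1 is the natural raw material --- but your parenthetical dimension estimate for $\mathcal W_4$ is not yet an argument: when $g\geq 2n+3$ the map $\mu_{W'}$ is surjective for general $W'\in G(4,V)$, so every component of the corank-$\geq 4$ incidence lies over the jumping locus $\{W':\dim\ker(\mu_{W'})>6\}$ in $G(4,V)$, whose dimension is exactly what you would still have to bound. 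So: same construction, a genuinely better argument for the component claim, and a shared gap at the exact-corank step which you, unlike the paper, at least state explicitly.
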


\begin{proof} Consider the space of extensions $Ext^1(K_C(-D_n),\mathcal{O}_C(D_n))$ which parametrizes exact sequences of the form
$$u:0\rightarrow \mathcal{O}_C(D_n)\rightarrow E \rightarrow K_C(-D_n)\rightarrow 0.$$
Set $\mathbb{P}:=\mathbb{P}Ext^1(K_C(-D_n),\mathcal{O}_C(D_n))$ and
consider the incidence variety 
$$\mathcal{J}=\{V\times u| Im(\mu_V)\subset H_u\}\subset G(3,H^0(K_C-D_n))\times \mathbb{P}.$$ 

Let $\pi_1:\mathcal{J}\rightarrow G(3,H^0(K_C-D_n))$ and $\pi_2:\mathcal{J}\rightarrow \mathbb{P}$ be the projections. By Proposition 3.1, for the general element $V\in G(3,H^0(K_C-D_n))$, 
the codimension of $Im(\mu_V)$ inside $H^0(2K_C-2D_n)$ is equal to $n$, then $Im(\mu_V)$ is contained in a hyperplane, and $\mathcal{J}$ is not empty. Then there exists an irreducible 
component $J_n\subset \mathcal{J}$ dominating the Grassmannian through $\pi_1$. The general fiber of $\pi_1|_{J_n}$ is 
isomorphic to the linear system of hyperplanes containing $Im(\mu_V)$, so the general fiber of $\pi_1|_{J_n}$ is of dimension $n-1$ and $dim(J_n)=3g-2n-10$. Set $\Lambda_n:=\pi_2(J_n)\subset \mathbb{P}$, and consider the image $X$ of the map $C\hookrightarrow \mathbb{P}$ induced by the linear system $|(K_C(-D_n))^{\otimes 2}|$. Note that for $\epsilon \in \{0,1\}$ and for $\sigma=g-8-\epsilon>0$ with $\sigma$ even, $dim(\Lambda_n)=3g-2n-10>3g-2n-11-\epsilon=dim(Sec_{2g-2n-5-\sigma}(X))$. By Proposition 1.1 of \cite{LN}, we have that for the general extension $u$ in $\Lambda_n$, the corresponding vector bundle $E=E_u$ is stable, so we have a rational map 
$$\pi:\Lambda_n\to B^4(2,K_C),\pi(u):=[E_u],$$ 
\noindent and in particular $\pi(\Lambda_n)$ is non-empty.

\noindent Let $u\in\text{Ext}^1(K_C(-D_n),\mathcal O_C(D_n))$ be and let $\partial_u:H^0(C,K_C(-D_n))\to H^1(\mathcal O_C(D_n))$ be the corresponding coboundary map. By Serre duality we have that $\partial_u$ is a symmetric map, that is $\partial_u=\partial_u^{\vee}$, then $\text{ker}(\partial_u)=\text{ker}(\partial_u^{\vee})=(\text{Im}(\partial_u))^{\perp}$, in particular $\text{ker}(\partial _u)$ is uniquely determined, so the general fiber of the map $\pi_2|_{J_n}: J_n\to\mathbb P$ is irreducible and zero-dimensional, then $\Delta_n:=\overline{\Lambda_n}\subset\mathbb P$ is an irreducible component of $\mathcal W_3$ 
of dimension $3g-10-2n$.\end{proof}

\noindent Until here we construct the locus $\Delta_n$ consisting in extensions whose general element defines (through the map $\pi$) an stable rank two vector bundle with canonical determinant with four sections. To conclude the proof of the Theorem 3.1 it remains to study the image $\pi(\Delta_n)\subset B^4(2,K_C)$. By let the divisor $D_n$ to vary in all general effective divisors of degree $n$, in notation as in Section 2.1  for $d=2g-2, \delta=2g-2-n$ we construct the locus $\widetilde{\Delta_n}$ in the global space of extensions and we study the moduli map ${\pi}_{d,\delta}|_{\widetilde{\Delta_n}}:\widetilde{\Delta_n}\to B^4(2,K_C)$.

\noindent Let $u: 0\to\mathcal O_C(D_n)\to E_u\to K_C(-D_n)\to 0$  be a general extension in $\Delta_n$. Consider the quotient $E=E_u\to\!\!\to K_C(-D_n)$ and let $\Gamma_{u_n}=\Gamma$ the corresponding section of such quotient.  Let $S=\mathbb P(E)$ and 
$p:S\to C$ be the structure map. Since $\mathcal N_{\Gamma/S}\simeq K_C(-2D_n)$, 
then $h^0(\Gamma, \mathcal N_{\Gamma/S})=g-2n$ and $h^1(\Gamma, \mathcal N_{\Gamma/S})=1$. With this notation we have the following theorem

\begin{lem} (i). The couple $(S, \Gamma)$ is not obstructed, that is, the first-order infinitesimal deformations of the closed embedding $\Gamma\hookrightarrow S$ are unobstructed with $S$ not fixed, in particular $\Gamma$ is unobstructed in $S$ fixed
 and $\Gamma$ varies in a family $(g-2n)$-dimensional. 

\noindent (ii). The section $\Gamma$ is linearly isolated.

\noindent (iii). The section $\Gamma$ is specially isolated.
\end{lem}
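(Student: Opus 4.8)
My plan is to prove the three parts by reducing everything to computations with the normal sheaf $\mathcal N_{\Gamma/S}\simeq K_C(-2D_n)$ and the tautological restriction $\mathcal O_S(1)\otimes\mathcal O_\Gamma$, using the identifications between unisecant curves and quotients of $E$ recalled in Section 2.2. The three statements are really about three different cohomology groups attached to $\Gamma$, so I would first set up the deformation-theoretic dictionary and then evaluate each group.

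For part (i), the first-order deformations of the pair $(S,\Gamma)$ with $S$ allowed to vary are governed by $H^0(\Gamma,\mathcal N_{\Gamma/S})$ together with the deformations of $S$, while the obstructions live in $H^1(\Gamma,\mathcal N_{\Gamma/S})$. The point is that on a smooth curve $\Gamma\simeq C$ the normal sheaf is a line bundle, so $H^0$ and $H^1$ are directly computable: we already have $h^0(\mathcal N_{\Gamma/S})=g-2n$ and $h^1(\mathcal N_{\Gamma/S})=1$. I would invoke the standard theory of deformations of a curve in a ruled surface (as in \cite{CF1}) to argue that the single-dimensional obstruction space is absorbed once $S$ is permitted to deform—i.e. the obstruction map vanishes because the ambient deformation of $\mathbb P(E)$ accounts for the extra cohomology—so the couple $(S,\Gamma)$ is unobstructed; fixing $S$, the section $\Gamma$ then moves in a family of dimension $h^0(\mathcal N_{\Gamma/S})=g-2n$. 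The main obstacle here is making precise that the obstruction in $H^1(\Gamma,\mathcal N_{\Gamma/S})\cong\mathbb C$ genuinely dies, rather than merely bounding dimensions; I expect this to follow from the surjectivity of the deformation map onto $H^1$ coming from deforming $E$, which is where I would spend the most care.

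For part (ii), that $\Gamma$ is linearly isolated means $\dim|\mathcal O_S(\Gamma)|=0$. Using $|\mathcal O_S(\Gamma)|\simeq\mathbb P(H^0(E(-D_n)))$ from Section 2.2 (with $N=\mathcal O_C(D_n)$), I would compute $h^0(E(-D_n))$. Twisting the defining extension $0\to\mathcal O_C(D_n)\to E\to K_C(-D_n)\to 0$ by $\mathcal O_C(-D_n)$ gives $0\to\mathcal O_C\to E(-D_n)\to K_C(-2D_n)\to 0$, and since the extension is general the coboundary is as injective as possible; counting sections then forces $h^0(E(-D_n))=1$, so $\dim|\mathcal O_S(\Gamma)|=0$ and $\Gamma$ is linearly isolated.

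For part (iii), specially isolated means $\dim_\Gamma(\mathcal F\cap\mathcal S^{1,n})=0$, i.e. $\Gamma$ does not move in a positive-dimensional family of special unisecants inside the relevant family $\mathcal F$. Since $h^1(\Gamma,\mathcal O_S(1)\otimes\mathcal O_\Gamma)=h^1(K_C(-D_n))$ and $\Gamma$ is special precisely when this is positive, I would translate the speciality condition back through $\Phi_{1,n}$ into a Brill–Noether condition on the quotient line bundle, and then use the genericity of $D_n$ together with the dimension count from Lemma 3.2 (that $\Delta_n$ has the expected dimension $3g-10-2n$) to conclude that the special locus through $\Gamma$ is zero-dimensional. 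The crux is matching the tangent space to $\mathcal F\cap\mathcal S^{1,n}$ at $\Gamma$ with a kernel of the multiplication map $\mu_V$, so that Proposition 3.1—which pins that kernel down to dimension exactly three—forces the special family to be isolated; this interplay between the Quot-scheme structure and the determinantal description of $\mathcal W_3$ is the step I expect to require the most bookkeeping.
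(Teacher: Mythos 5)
The paper itself proves only part (iii), quoting \cite{CR} (Lemma 2, Step 2) for (i) and (ii), so your proposal must stand or fall on its own arguments for (ii) and (iii), and both have genuine gaps. In (ii), the sentence ``since the extension is general the coboundary is as injective as possible'' is precisely what needs proof and cannot be waved through: the extension $u$ is \emph{not} general in $\mathbb{P}\,Ext^1(K_C(-D_n),\mathcal O_C(D_n))$, it is general only in $\Delta_n\subset\mathcal W_3$, a determinantal locus defined exactly by forcing the closely related coboundary $\partial_u:H^0(K_C(-D_n))\to H^1(\mathcal O_C(D_n))$ to have corank $\geq 3$. A priori this constraint could also force the twisted coboundary $\delta:H^0(K_C(-2D_n))\to H^1(\mathcal O_C)$ to degenerate, so one must actually show that the locus $\{u:\ker\delta_u\neq 0\}$ (which is a union of linear spaces of codimension $g$, one for each $\omega\in\mathbb{P}H^0(K_C(-2D_n))$, hence of dimension at most $3g-4n-5$) cannot contain the $(3g-2n-10)$-dimensional $\Delta_n$; some such count, or the argument of \cite{CR}, is indispensable.

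The more serious problem is (iii): you propose to identify the tangent space to $\mathcal F\cap\mathcal S^{1,n}$ at $\Gamma$ with a kernel of the multiplication map $\mu_V$ of Proposition 3.1, but that is the wrong map. The kernel of $\mu_V$ has dimension three because it computes $h^0(E)$; it enters only in the construction of $\Delta_n$ in Lemma 3.2 and says nothing about unisecants deforming inside a \emph{fixed} surface $S$. What controls special isolation is the Petri map of the divisor: writing $N=\mathcal O_C(D_n)$ and $V_1=H^0(K_C\otimes (N^{\vee})^2)$, first-order deformations of the quotient $E\to\!\!\to K_C(-D_n)$ inside $S$ are identified (via the coboundary $\delta$, injective by (ii)) with $\delta(V_1)\subset H^1(\mathcal O_C)$, and those preserving speciality lie in $\delta(V_1)\cap T_{[N]}W^0_n(C)=\delta(V_1)\cap(\text{Im}(\mu_N))^{\perp}$, where $\mu_N:H^0(\mathcal O_C(D_n))\otimes H^0(K_C(-D_n))\to H^0(K_C)$. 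The paper then kills this intersection not by any dimension count but by a duality argument: $(\text{Im}(\mu_N))^{\perp}\subset V_1^{\perp}$, and the Serre pairing composed with $\delta$ gives a nondegenerate (by symmetry of the coboundary) map $V_1\to V_1^{\vee}$, so an element of $\delta(V_1)\cap V_1^{\perp}$ must vanish. Likewise, invoking ``the dimension count from Lemma 3.2'' is off-target: $\dim\Delta_n=3g-10-2n$ measures a family of extensions over varying coboundary data, and does not bound the family of special unisecants in the single ruled surface $\mathbb P(E_u)$. As it stands, your plan for (iii) would not close.
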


\begin{proof} The proof of this result is similar to the proof in \cite{CR} (Lemma 2, Step 2.). In order to make the paper self-contained we show only
(iii).
 
\noindent Proof of (iii). Let $N=\mathcal O_C(D_n)$ be; tensoring by 
$N^{\vee}=\mathcal O_C(-D_n)$ the exact sequence
$$u: 0\to N\to E_u\to K_C\otimes N^{\vee}\to 0,$$
\noindent  we have in cohomology the exact sequence 
$$0\to H^0(\mathcal O_C)\to H^0(E\otimes N^{\vee})\to H^0(K_C\otimes (N^{\vee})^2)\xrightarrow{\delta}H^1(\mathcal O_C)\to\cdots$$
\noindent Since $|\mathcal O_S(\Gamma)|\simeq\mathbb{P}H^0(E(-D_n))$, the fact that $\Gamma$ is linearly isolated implies that $h^0(E(-D_n))=1$, then $\delta$ is an isomorphism onto its image. Denote by $<,>$ the Serre duality pairing, and let $V_1:=H^0(K_C\otimes (N^{\vee})^2)$ be. By (i) and (ii) and Section 2.2, we can identify $\delta(V_1)$ with the tangent space at $N$ to 
$\text{Quot}^C_E$, the Quot-Scheme that parametrizes quotient line bundles of $E$. Thus, to prove that $\Gamma$ is specially isolated we need to prove that 
$\delta(V_1)\cap T_{[N]}(W^0_3(C))=(0)=\delta(V_1)\cap (\text{Im}(\mu_{N}))^{\perp}$, where for 
$\ell=1,2$, $\mu_{N^{\ell}}:H^0(\mathcal O_C(\ell D_n))\otimes H^0(K_C(-\ell D_n))\to H^0(C,K_C)$ is the Petri map. Since $\text{Im}(\mu_{N^2})=V_1\subset\text{Im}(\mu_N)=H^0(K_C\otimes N^{\vee})$, then $(\text{Im}(\mu_N))^{\perp}\subset V_1^{\perp}$. 
Let $\delta(\omega)\in\delta(V_1)\cap(\text{Im}(\mu_{N}))^{\perp}\subset\delta(V_1)\cap V_1^{\perp}$. Then 
$<\delta(\omega),v>=0$ for all $v\in V_1$, that is, $\delta(\omega)\in\text{Ker}(V_1\xrightarrow{\beta}V_1^{\vee})$, 
where $\beta: x\to\beta_x, \beta_x(v)=<x,v>$. By duality, $\beta$ is an isomorphism, then $\delta(\omega)=0$; 
since $\delta$ is injective, $\omega=0$. This proves that $\Gamma$ is specially isolated.
\end{proof}

\begin{lem} The map $\pi: \Delta_n\to B^4(2,K_C)$ is generically injective.\end{lem}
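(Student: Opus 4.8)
The plan is to show that the rational map $\pi:\Delta_n\to B^4(2,K_C)$ separates general points by recovering the extension datum $(\mathcal{O}_C(D_n)\hookrightarrow E_u)$ intrinsically from the isomorphism class of the bundle $E_u$. The strategy is: given a general $E=E_u$ in the image, I would first argue that the sub-line-bundle $\mathcal{O}_C(D_n)\hookrightarrow E$ is canonically determined by $E$, and then that once this sub-bundle is fixed the extension class $[u]\in\mathbb{P}Ext^1(K_C(-D_n),\mathcal{O}_C(D_n))$ is itself recovered up to the scalar quotiented out in the projectivization. Combining these two recoveries shows the fiber of $\pi$ over $[E]$ consists of a single point of $\Delta_n$, giving generic injectivity.

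For the first recovery step I would use the stability of $E$ together with the degree constraints. Since $\deg\mathcal{O}_C(D_n)=n$ and the maximal sub-line-bundles of a stable bundle have bounded degree, the destabilizing-type bound (Nagata/Segre) combined with the genericity of $D_n$ should force $\mathcal{O}_C(D_n)$ to be (up to the finitely many possibilities) the unique maximal, or at least distinguished, sub-line-bundle realizing a quotient with the requisite sections. Concretely, I expect to invoke Lemma 3.4, specifically parts (ii) and (iii): the section $\Gamma$ corresponding to the quotient $E\to\!\!\to K_C(-D_n)$ is both \emph{linearly isolated} ($h^0(E(-D_n))=1$, so the sub-bundle embeds rigidly as a single point of $|\mathcal{O}_S(\Gamma)|$) and \emph{specially isolated} ($\dim_{\Gamma}(\mathcal{F}\cap\mathcal{S}^{1,n})=0$). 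Linear isolation rules out a positive-dimensional family of quotients inside a fixed linear system, and special isolation rules out nearby special unisecants of the same degree $n$; together these pin down $\Gamma$, hence the surjection $E\to\!\!\to K_C(-D_n)$ and therefore the sub-bundle $\mathcal{O}_C(D_n)=\ker$, uniquely in $\mathrm{Quot}^C_E$.

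Once the sub-bundle and quotient are fixed, the extension class is determined as follows. The bundle $E$ sits in exactly one short exact sequence with these prescribed kernel and cokernel up to the action of automorphisms of the sub- and quotient-line-bundles; since line bundles on a curve have automorphism group $\mathbb{C}^*$, two extensions yielding isomorphic $E$ (compatibly with the identification of sub and quotient) differ by a nonzero scalar, i.e. define the same point of $\mathbb{P}Ext^1$. The injectivity $\partial_u$ being symmetric (established in the proof of Lemma 3.3) guarantees that $\ker(\partial_u)$, and hence the extension's cohomological content, is rigidly tied to $[u]$, so no two distinct points of $\Delta_n$ can map to the same $[E]$. I would therefore conclude that $\pi$ is injective on a dense open subset of $\Delta_n$.

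The main obstacle I anticipate is the first step: rigorously excluding the possibility that a general $E=E_u$ admits a \emph{second} sub-line-bundle of degree $n$ (distinct from $\mathcal{O}_C(D_n)$) whose quotient also lies in $W^0_3(C)$, which would produce an extra point in the fiber. Linear isolation controls deformations of a single $\Gamma$ but does not by itself forbid a geometrically separate special unisecant of the same degree elsewhere on $S=\mathbb{P}(E)$. To close this gap I would count dimensions: the locus of pairs $(E,\Gamma')$ with a second such quotient would have to fit inside $\mathcal{S}^{1,n}\subset\mathrm{Div}^{1,n}(S)$, and a genericity/dimension estimate using $h^0(E(-D_n))=1$ and the expected dimension $3g-10-2n$ of $\Delta_n$ should show that the generic $E$ carries only the one expected such section. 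Making this dimension count precise, and confirming that the genericity of $D_n$ propagates to genericity of $E$ in its image, is where the real work lies.
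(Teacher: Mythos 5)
Your core mechanism is the paper's: once an isomorphism $\phi:E_u\to E_{u'}$ is known to carry sub-bundle to sub-bundle, the two extension classes differ by a nonzero scalar and define the same point of $\mathbb{P}\mathrm{Ext}^1$, and the input making this work is $h^0(E_u(-D_n))=1$, i.e.\ linear isolation from the isolation lemma. But you have embedded this inside a harder problem --- recovering $\mathcal{O}_C(D_n)$ intrinsically among \emph{all} degree-$n$ sub-line-bundles of $E$ --- and you concede you cannot finish that step. The gap is self-inflicted: it comes from misreading the fiber of $\pi$. The domain $\Delta_n$ lies in $\mathbb{P}\mathrm{Ext}^1(K_C(-D_n),\mathcal{O}_C(D_n))$ for one \emph{fixed} divisor $D_n$, so a point $u'\in\pi^{-1}([E_u])$ is, by definition, an extension of $K_C(-D_n)$ by $\mathcal{O}_C(D_n)$ --- the same two line bundles --- whose middle term is isomorphic to $E_u$. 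A hypothetical second degree-$n$ sub-line-bundle $M\not\simeq\mathcal{O}_C(D_n)$ would yield a point of the different space $\mathbb{P}\mathrm{Ext}^1(K_C\otimes M^{\vee},M)$, hence contributes nothing to this fiber. Consequently the only freedom along the fiber is the choice of embedding $\mathcal{O}_C(D_n)\hookrightarrow E_{u'}$, i.e.\ of a nonzero section of $E_{u'}(-D_n)$; since $h^0(E_{u'}(-D_n))=1$, the sections $\phi\circ\iota_1$ and $\iota_2$ are proportional, so $\phi$ is an isomorphism of extensions up to scalar and $[u]=[u']$. That short argument is the paper's entire proof of the statement; neither special isolation (your part (iii)) nor any secant or dimension count is needed here.

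Your anticipated obstacle is, however, exactly the issue for the \emph{global} map $\pi_{d,\delta}|_{\widetilde{\Delta}_n}$, where $D_n$ is allowed to vary and distinct divisors could a priori produce the same bundle; that is the second half of the paper's proof of this lemma (treated rather briskly there). So your instinct detects a real subtlety --- just not one belonging to the statement you were asked to prove --- and your proposal as written remains incomplete because you make its resolution a prerequisite for what is actually the easy part.
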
 

\begin{proof} For a general element $[E_u]\in\pi(\Delta_n)\subset B^4(2,K_C)$, the fiber $\pi^{-1}([E_u])$ 
corresponds to the extensions $u'\in\Delta_n$ such that there exists a diagram
\begin{align}
\xymatrix{
u:0\ar[r] & \mathcal O_C(D_n)\ar[r]^{\iota_1} & E_u\ar[r]\ar[d]_{\phi} & K_C(-D_n)\ar[r] & 0\\
u':0\ar[r] & \mathcal O_C(D_n)\ar[r]^{\iota_2} & E_{u'} \ar[r] & K_C(-D_n)\ar[r]  & 0
 }\end{align}
where $\phi:E_u\to E_{u'}$ is an isomorphism of stable bundles. The maps $\phi\circ\iota_1$ and $\iota_2$ 
determine two non-zero sections $\sigma_1\neq \sigma_2$ in $H^0(E_u(-D_n))$. For the general 
extension $u\in\Delta_n$, we have by Lemma 3.3 that  $h^0(E_u(-D_n))=1$, so $\phi\circ\iota_=\lambda\iota_2$, for some scalar $\lambda\neq 0$. Since $E_u$ is stable, then $E_u=E_{u'}$, i.e $u,u'$ are proportional in $\Delta_n$, then $\pi$ is generically injective. 

\noindent For a general effective divisor of degree $n$, $D_n$, $L=K_C(-D_n)$ depends on $\rho(L)=n$ parameters, then with notation as in Section 2.1 we have an irreducible component 
$\widetilde{\Delta}_n\subset\mathbb P(\mathcal E)$ of dimension $3g-10-2n+\rho(L)=3g-10-n$, where a point in
$\widetilde{\Delta_n}$ corresponds to the datum of a pair $(y,u)$ with $y=(\mathcal O_C(D_n),K_C(-D_n))$, $D_n$ a 
general effective divisor of degree $n$ and $u\in\Delta_n$ an extension as in Lemma 3.1. For $d=2g-2, \delta=2g-2-n$, we have that the moduli map ${\pi}_{d,\delta}|_{\widetilde{\Delta_n}}:\widetilde{\Delta_n}\to B^4(2,K_C)$ is generically injective and its image fills up an irreducible closed sublocus $\mathcal B_n:=\overline{{\pi}_{d,\delta}|_{\widetilde{\Delta_n}}(\widetilde{\Delta_n})} \subset B^4(2,K_C)$ of dimension $3g-10-n$.\end{proof} 
\noindent With the following Lemma we complete the proof of Theorem \ref{mainthm}.

\begin{lem}\label{lemaestratos}  For $n\geq 3$, $\mathcal B_{n+1}\subset\mathcal B_n$\end{lem}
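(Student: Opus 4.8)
The goal is to prove $\mathcal B_{n+1}\subset\mathcal B_n$, where each $\mathcal B_n$ is the closure of the image of $\widetilde{\Delta}_n$ under the moduli map, and a general point of $\mathcal B_n$ is a stable bundle $E$ fitting in $0\to\mathcal O_C(D_n)\to E\to K_C(-D_n)\to 0$ with $D_n$ general of degree $n$.

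\textbf{Approach.} The plan is to show that a bundle arising from the stratum $\mathcal B_{n+1}$ also admits a presentation of the type defining $\mathcal B_n$, thereby exhibiting $\mathcal B_{n+1}$ as a sublocus of $\mathcal B_n$. First I would take a general point $[E]\in\mathcal B_{n+1}$, so that $E$ sits in an exact sequence $0\to\mathcal O_C(D_{n+1})\to E\to K_C(-D_{n+1})\to 0$ with $D_{n+1}$ a general effective divisor of degree $n+1$. The key observation is that $\det(E)=K_C$ in both cases, so the difference between the two strata lies only in the choice of the sub-line-bundle $\mathcal O_C(D_n)$ versus $\mathcal O_C(D_{n+1})$. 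I would write $D_{n+1}=D_n+x$ for a point $x\in C$, so that $\mathcal O_C(D_n)\subset\mathcal O_C(D_{n+1})$ is a subsheaf, and correspondingly $K_C(-D_{n+1})\subset K_C(-D_n)$.

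\textbf{Key steps.} The heart of the argument is to produce from the $\mathcal B_{n+1}$-presentation a quotient $E\to\!\!\to K_C(-D_n)$, equivalently a section $\mathcal O_C(D_n)\hookrightarrow E$ with the right saturation. Concretely I would argue that the composite inclusion $\mathcal O_C(D_n)\hookrightarrow\mathcal O_C(D_{n+1})\hookrightarrow E$ gives a nonzero section of $E(-D_n)$, and that for the general such $E$ this inclusion is saturated, yielding an extension $0\to\mathcal O_C(D_n)\to E\to K_C(-D_n)\to 0$ of exactly the form parametrizing $\mathcal B_n$. This places $[E]$ in the image of the moduli map on $\widetilde{\Delta}_n$, hence in $\mathcal B_n$. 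Since $\mathcal B_n$ is closed and this holds for the general point of the irreducible locus $\mathcal B_{n+1}$, the inclusion $\mathcal B_{n+1}\subset\mathcal B_n$ follows by taking closures. One should also verify the numerics are consistent: $\dim\mathcal B_{n+1}=3g-11-n<3g-10-n=\dim\mathcal B_n$, so the inclusion is strict, matching the claim that the $\mathcal B_n$ genuinely stratify $\mathcal B_3$.

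\textbf{Main obstacle.} The delicate point is the saturation and the containment at the level of the extension data, not merely at the level of bundles: I must check that the sub-line-bundle $\mathcal O_C(D_n)\hookrightarrow E$ obtained by restriction is a \emph{maximal} (saturated) subsheaf, so that the quotient is the honest line bundle $K_C(-D_n)$ rather than a subsheaf with torsion quotient, and that the resulting extension class lies in $\Delta_n$ (i.e. satisfies the determinantal condition $\mathrm{Im}(\mu_V)\subset H_u$ cutting out $\mathcal W_3$). Equivalently, I would trace through the identification of fibers via Lemma 3.3, using that for the general $E$ one has $h^0(E(-D_n))$ controlled, to ensure the section $\mathcal O_C(D_n)\hookrightarrow E$ deforms compatibly as $D_n$ and the extension vary in $\widetilde{\Delta}_n$. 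Controlling this compatibility—so that the $\mathcal B_{n+1}$-family maps into the $\mathcal B_n$-family rather than merely meeting it set-theoretically—is where the real work lies; the preceding lemmas on linear and special isolation of $\Gamma$ should supply exactly the rigidity needed to make the deformation argument go through.
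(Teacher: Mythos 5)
Your construction breaks down at its central step, and the failure is structural rather than a matter of genericity. The composite inclusion $\mathcal O_C(D_n)\hookrightarrow\mathcal O_C(D_{n+1})\hookrightarrow E$ is \emph{never} saturated: writing $D_{n+1}=D_n+x$, the quotient $E/\mathcal O_C(D_n)$ taken along this composite is an extension of $K_C(-D_{n+1})$ by the length-one skyscraper sheaf at $x$, so it always has torsion, and the saturation of $\mathcal O_C(D_n)$ inside $E$ is exactly $\mathcal O_C(D_{n+1})$. Hence this map can never yield an exact sequence $0\to\mathcal O_C(D_n)\to E\to K_C(-D_n)\to 0$, for any $E$ whatsoever. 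Nor can the defect be repaired by choosing a different embedding of $\mathcal O_C(D_n)$: twisting the defining sequence of $E$ by $-D_n$ gives $0\to\mathcal O_C(x)\to E(-D_n)\to K_C(-D_{n+1}-D_n)\to 0$, and any section of $E$ vanishing on $D_n$ maps to an element of $V\cap H^0(K_C(-D_{n+1}-D_n))$, where $V\subset H^0(K_C(-D_{n+1}))$ is the $3$-plane attached to $u$. Since for a general point of $\widetilde{\Delta}_{n+1}$ the plane $V$ is general (dominance of $\pi_1|_{J_{n+1}}$) and $H^0(K_C(-D_{n+1}-D_n))$ has codimension $n\geq 3$, this intersection is zero, so $h^0(E(-D_n))=1$ and the \emph{only} section is the non-saturated composite one; for degree-$n$ divisors $D_n'$ not contained in $D_{n+1}$ one gets $h^0(E(-D_n'))=0$ generically. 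In other words, a general $E\in\mathcal B_{n+1}$ admits \emph{no} presentation of the kind you want to exhibit: it can lie in $\mathcal B_n$ only as a \emph{limit} of bundles with degree-$n$ presentations, which is precisely why $\mathcal B_n$ is defined as a closure. Your closing appeal to closures does not save the argument, because you never place $E$, nor any bundle converging to it, in the image of $\widetilde{\Delta}_n$.

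This is also where your route diverges from the paper's. The paper never attempts to produce a degree-$n$ sub-line bundle of $E$ itself; after reducing by induction to $\mathcal B_4\subset\mathcal B_3$, it works one level up, with the determinantal data in the extension spaces: it writes $D_4=D_3+q$, uses the inclusion $H^0(K_C(-D_4))\hookrightarrow H^0(K_C(-D_3))$ to regard $V$ as a point of $G(3,H^0(K_C(-D_3)))$, invokes the dominance of the incidence projections $\pi_1|_{J_n}$ for $n=3,4$ from Lemma 3.2 to compare the loci $\Delta_4$ and $\Delta_3$, and concludes $E\in\mathcal B_3$ through the global construction of Section 2.1, i.e.\ by a degeneration argument inside $\mathbb P(\mathcal E)$ rather than by exhibiting a subsheaf of $E$. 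Your dimension remark ($\dim\mathcal B_{n+1}=3g-11-n<3g-10-n=\dim\mathcal B_n$) is correct but tangential; the missing content is precisely the mechanism by which bundles with $D_{n+1}$-presentations arise as limits of those with $D_n$-presentations.
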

\begin{proof}
\noindent Note that by an inductive step, it is enough to show that 
$\mathcal B_4\subset\mathcal B_3$. 

\noindent Consider a general element $E\in{\pi}_{d,\delta}|_{\widetilde{\Delta_4}}(\widetilde{\Delta_4})$, then there exists a general effective divisor $D_4$ of degree four such that $E$ fits into an exact sequence 
$$u:0\rightarrow \mathcal{O}_C(D_4)\rightarrow E \rightarrow K_C(-D_4)\rightarrow 0,$$ where $V:=Coker(\partial_u)$ is general in the Grassmannian $G(3,H^0(K_C(-D_4)))$. Fix three points in $Supp(D_4)$ and consider the degree three effective divisor $D_3$ given by these three points, then we can write $D_4=D_3+q$, $q\in C$ a general point.  The natural inclusion $H^0(K_C(-D_4))\hookrightarrow H^0(K_C(-D_3))$ inside $H^0 (K_C)$ induces an inclusion in Grassmannians $$G(3,H^0(K_C(-D_4)))\hookrightarrow G(3,H^0(K_C(-D_3))).$$ 

\noindent From Lemma 3.2 we have in particular that for $n=3,4$ the loci 
\small$$J_n:=\{W\times u| Im(\mu_W)\subset \{u=0\}\}\subset G(3,H^0(K_C(-D_n)))\times \mathbb{P}Ext^1(K_C(-D_n),\mathcal O_C(D_n)).$$
\normalsize
are such that $\pi_1|_{J_n}:J_n\rightarrow G(3,H^0(K_C(-D_n)))$ is dominant. Note also that for every $n\geq 3$, elements $u\in \Delta_n$ are such that $V:=Coker(\partial_u)$ satisfy the conditions of Proposition 3.1. Moreover, we have that
$\Delta_4=\overline{\pi_2(J_4)}\subsetneq\Delta_3=\overline{\pi_2(J_3)}$. By the construction from Section \ref{extensions} we have that $E\in\mathcal B_3$, then 
$\mathcal B_4\subset\mathcal B_3$.\end{proof}
 
 \noindent Note that the proof of Theorem 3.1 is obtained by applying from Lemma 3.2 to Lemma 3.5.

\end{document}